\newtheorem{thm}{Theorem}[section]
\newtheorem{prop}[thm]{Proposition}
\newtheorem{lem}[thm]{Lemma}
\newtheorem{cor}[thm]{Corollary}
\theoremstyle{definition}
\newtheorem{definition}[thm]{Definition}
\newtheorem{example}[thm]{Example}
\theoremstyle{remark}
\newtheorem{remark}[thm]{Remark}
\numberwithin{equation}{section}
\newcommand{\R}{\mathbb{R}}
\newcommand{\T}{\mathcal{T}}
\DeclareMathOperator{\alt}{alt}
\DeclareMathOperator{\supp}{supp}
\DeclareMathOperator{\vol}{vol}
\DeclareMathOperator{\id}{id}
\DeclareMathOperator{\Ker}{Ker}
\DeclareMathOperator{\Image}{Im}
\DeclareMathOperator{\Homeo}{Homeo}
\DeclareMathOperator{\Diff}{Diff}
\DeclareMathOperator{\Symp}{Symp}
\DeclareMathOperator{\sgn}{sgn}
\DeclareMathOperator{\Flux}{Flux}
\subjclass[2000]{57S05, 20J06}
\keywords{bounded cohomology; relative quasimorphisms; diffeomorphism groups}
\begin{document}

\title{Norm-controlled cohomology of transformation groups}

\author{Mitsuaki Kimura}
\address{Department of Mathematics, Kyoto University, Kitashirakawa Oiwake-cho, Sakyo-ku, Kyoto 606-8502, Japan}
\email{mkimura@math.kyoto-u.ac.jp}

\begin{abstract}
We generalize the result of Brandenbursky and Marcinkowski for the bounded cohomology of transformation groups to the infinite volume case.
To state the result, we introduce the notion of norm-controlled cohomology as a generalization of bounded cohomology.
This cohomology theory also provides a framework for relative quasimorphisms.
\end{abstract}

\maketitle

\section{Introduction} \label{section:intro}

Since Gromov's seminal paper \cite{MR686042}, bounded cohomology has received considerable attention and has been studied by many authors. However, its computation is difficult in general.
In particular, bounded cohomology of third degree or higher had rarely been calculated.
Recently, Brandenbursky and Marcinkowski \cite{brandenbursky2019bounded} proved that the third bounded cohomology is infinite-dimensional for certain transformation groups of non-positively curved manifolds.
In this paper, we generalize the result of Brandenbursky and Marcinkowski by introducing a new cohomology theory.

Let $M$ be a complete connected Riemannian manifold and $\mu$ the measure on $M$ induced by the Riemannian metric.
Let $\Homeo_0(M,\mu)$ denote the identity component of the group of measure-preserving homeomorphisms of $M$ with compact support. Here, we endow $\Homeo_0(M,\mu)$ with the compact-open topology.
In \cite{brandenbursky2019bounded}, Brandenbursky and Marcinkowski gave a construction of bounded cohomology classes
 of $\Homeo_0(M,\mu)$ for the case when $M$ is of finite volume. We consider the following problem: what happens if $M $ is of infinite volume?
We observe that their construction gives not necessarily a bounded cochain but a ``norm-controlled'' cochain (Proposition \ref{prop:main_observation}).
In order to regard this cochain as a certain cohomology class, we introduce a notion of norm-controlled cohomology. 
Here we review the definition of a norm on a group.
Let $G$ be a group and $1_G$ denote the identity element of $G$. A function $\nu \colon G \to [0,\infty)$ is a \emph{norm} if it satisfies
\begin{enumerate}
  \item $\nu(gh)\leq \nu(g)+\nu(h)$ for any $g,h \in G$,
  \item $\nu(g^{-1})=\nu(g)$ for any $g \in G$,
  \item $\nu(1_G)=0$,
  \item $\nu(g)>0$ if $g \neq 1_G$.
\end{enumerate}
If one drops condition (4), $\nu$ is said to be a \emph{pseudo norm}.
If a norm $\nu$ additionally satisfies the condition $\nu(ghg^{-1})=\nu(h)$ for every $g,h \in G$, $\nu$ is called a \emph{conjugation-invariant norm}.
The concept of conjugation-invariant norm was introduced and studied by Burago, Ivanov, and Polterovich in \cite{MR2509711}.
The definition (of the simplest version) of norm-controlled cohomology is the following.

\begin{definition}
Let $G$ be a group and $\nu$ a (pseudo) norm on $G$. 
We define $\bar{C}_{\nu}^n(G)$ to be the set of (inhomogeneous) cochains $\bar{c} \in \bar{C}^n(G)$ satisfying the following:
there exist  $C,D \geq 0$ such that for all  $g_1 ,\dots ,g_n \in G$,
\[ |\bar{c}(g_1,\dots,g_n)| \leq C(\nu(g_1)+\dots + \nu(g_n)) + D. \]
\emph{The norm-controlled cohomology}, denoted by $H_{\nu}^n(G)$, is defined to be the cohomology of the cochain complex $(\bar{C}_{\nu}^n(G), \bar{\delta})$.
\emph{The exact norm-controlled cohomology} $EH_{\nu}^n(G)$ is the kernel of the comparison map $H_{\nu}^n(G) \to H^n(G)$.
\end{definition}

Here we only consider the cohomology with real coefficients. See Section \ref{section:group_coh} for more details on group cohomology. If $\nu$ is a bounded norm, then $H_{\nu}^n(G)$ is nothing but the bounded cohomology $H_b^n(G)$ of $G$.
Note that a similar generalization of bounded cohomology is studied for finitely generated groups and its word length, which is called the \emph{polynomially bounded cohomology} (see \cite{MR2109110} for example).

We mainly consider several subgroups of $\Homeo_0(M,\mu)$.
Let $\Diff_0(M, \vol)$ denote the identity component of the group of diffeomorphisms with compact support which preserve the volume form induced by the Riemannian metric. If $(M,\omega)$ is a symplectic manifold, let $\Symp_0(M,\omega)$ denote the identity component of the group of symplectomorphisms with compact support.
We regard $\Symp_0(M,\omega)$ as a subgroup of $\Homeo_0(M,\mu)$ by considering $\mu$ induced by the volume form $\omega^{(\dim M)/2}$.

In this paper, we consider norm-controlled cohomology with respect to the fragmentation norm defined as follows.

\begin{definition} \label{def:frag}
Let $U$ be an open subset of $M$. Let $\mathcal{S}_U$ denote the set of elements $h \in \T_M$ that satisfy the following condition: there exists an isotopy $\{h_t\}_{0\leq t\leq 1}$ of $h$ such that $\supp(h_t) \subset U$ for every $t\in[0,1]$.
We define the \emph{fragmentation norm $\nu_U$ with respect to $U$}  on $\T_M$ by
\begin{equation*}
  \nu_U(g) = \min
  \left\{ k \; \middle|
  \begin{gathered}
    \begin{gathered}\end{gathered}
    \exists f_i \in \T_M, \exists h_i \in \mathcal{S}_U, (i=1,\dots,k) \\
    g=(f_1^{-1} h_1 f_1) \cdots (f_k^{-1} h_k f_k)
  \end{gathered}
  \right\}
\end{equation*}
for $g \in \T_M$. If no such decomposition of $g$ exists, we define $\nu_U(g)=+\infty$.
We say that $\nu_U$ is \emph{well-defined on $\T_M$} if $\nu_U(g)<+\infty$ for all $g\in \T_M$.
\end{definition}

\begin{example}
Let $M$ be a manifold, $U$ a non-empty open subset of $M$, and $i \colon U \to M$ the inclusion.

\begin{itemize}

\item Let $\T_M$ be $\Symp_0(M,\omega)$ and $\Flux_{\omega} \colon \T_M \to H^1_c(M;\R)/\Gamma_{\omega}$ denote the symplectic flux homomorphism, where $\Gamma_{\omega}$ is the symplectic flux group.
Since $\Ker(\Flux_{\omega})$
has the fragmentation property
\cite{MR490874} (see also \cite{MR1445290}), 
$\Ker(\Flux_{\omega})$ is normally generated by $\mathcal{S}_U$ and hence $\nu_U$ is well-defined on $\Ker(\Flux_{\omega})$.

\medskip

\item Let $\T_M$ be $\Diff_0(M,\vol)$ and $\Flux \colon \T_M \to H^{n-1}_c(M;\R)/\Gamma$ denote the volume flux homomorphism, where $\Gamma$ is the volume flux group.
Since $\Ker(\Flux)$ has the fragmentation property
(an unpublished result of W.~Thurston, see Banyaga's book \cite{MR1445290}),
$\nu_U$ is well-defined on $\Ker(\Flux)$.

\medskip

\item Let $\T_M$ be $\Homeo_0(M,\mu)$ and $\widetilde{\T}_M$ its universal covering, where $\T_M$ is endowed with the $C^0$-topology. In \cite{MR584082}, Fathi defined the homomorphism $\tilde{\theta} \colon \widetilde{\T}_M \to H_1(M;\R)$ and $\tilde{\theta}$ induces the mass flow homomorphism
$\theta \colon \T_M \to H_1(M;\R)/\Gamma$, where $\Gamma = \tilde{\theta}(\pi_1(\widetilde{\T}_M))$
(see also \cite{MR2377251}).
Since $\Ker(\theta)$ has the fragmentation property
\cite{MR584082},
$\nu_U$ is well-defined on $\Ker(\theta)$.
\end{itemize}

\end{example}

Henceforth $H^{\bullet}_c$ denotes the (de Rham) cohomology with compact support and $H^{\bullet}_c$ defines a covariant functor.

\begin{example}
    Let $M$, $U$, and $i \colon U \to M$ be as above.
    We can think of $\T_U$ as a subgroup of $\T_M$ by extending the diffeomorphism on $U$ to $M$.
\begin{itemize}

\item Let $\T_M=\Symp_0(M,\omega)$. If $i^{\ast} \colon H_c^1(U; \R) \to H_c^1(M; \R)$ is surjective,
we can see that $\nu_U$ is well-defined on $\T_M$ as follows. For $g \in \T_M$, there exists $h \in \T_U \subset \T_M$ such that $\Flux_{\omega}(g)=\Flux_{\omega}(h)$
since the flux homomorphism is surjective and
by the assumption of $i^{\ast}$.
Thus $g=(g h^{-1})  h$ is written as a product of the conjugation of the elements of
$\mathcal{S}_U$ since  $h \in \mathcal{S}_U $ and $gh^{-1} \in \Ker(\Flux_{\omega})$.

\begin{center}
  \begin{tikzpicture}[auto]
  \node (01) at (0, 1) {$\pi_1(\T_M)$}; 
  \node (11) at (3, 1) {$\widetilde{\T}_M$};
  \node (21) at (6, 1) {$\T_M$};
  \node (00) at (0, 0) {$\Gamma_{\omega}$}; 
  \node (10) at (3, 0) {$H^1_c(M;\R)$};
  \node (20) at (6, 0) {$H^1_c(M;\R)/\Gamma_{\omega}$};
  \draw[->] (00) to node {} (10); 
  \draw[->] (10) to node {} (20);
  \draw[->] (01) to node {} (11); 
  \draw[->] (11) to node {} (21);
  \draw[->] (01) to node {$\widetilde{\Flux}_{\omega}|_{\pi_1(\T_M)}$} (00); 
  \draw[->] (11) to node {$\widetilde{\Flux}_{\omega}$} (10);
  \draw[->] (21) to node {$\Flux_{\omega}$} (20);
  \end{tikzpicture}
\end{center}

\medskip

\item Let $\T_M=\Diff_0(M,\vol)$. If $i^{\ast} \colon H_c^{n-1}(U; \R) \to H_c^{n-1}(M; \R)$ is surjective, we can see that $\nu_U$ is well-defined on $\T_M$ by the same argument.
\medskip

\item Let $\T_M=\Homeo_0(M,\mu)$. If $i_{\ast} \colon H_1(U; \R) \to H_1(M; \R)$ is surjective, we can see that $\nu_U$ is well-defined on $\T_M$ by the same argument.

 \end{itemize}

 \end{example}

Let $\pi_M$ denote the quotient group of the fundamental group of $M$ by its center.
The main theorem of this paper is the following.

\begin{thm} \label{thm:main}
Let $\T_M$ be $\Homeo_0(M,\mu)$, $\Diff_0(M, \vol)$, or $\Symp_0(M,\omega)$.
Assume that there exists an open subset $U$ of $M$ with finite volume such that $\nu_U$ is well-defined on $\T_M$.
If either
\begin{enumerate}
\item $ \pi_M$ surjects onto $F_2$ or
\item $\pi_M$ is an acylindrically hyperbolic group,
\end{enumerate}
then
  \[ \dim_{\R} EH^n_{\nu_U}(\T_M) \geq \dim_{\R}  \overline{EH}_b^n(F_2)\]
for every degree $n \geq 1$.
\end{thm}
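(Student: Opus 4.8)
The plan is to exhibit an injective linear map $\overline{EH}_b^n(F_2)\hookrightarrow EH^n_{\nu_U}(\T_M)$, from which the dimension inequality is immediate. This map will be the composite of three ingredients: a transfer from $F_2$ into $\pi_M$ on bounded cohomology, the transport of bounded classes on $\pi_M$ to $\nu_U$-controlled classes on $\T_M$ supplied by the construction behind Proposition~\ref{prop:main_observation}, and a geometric splitting realized by point-pushing transformations that provides the injectivity.

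First I would set up the transfer from $F_2$ to $\pi_M$. In case~(1), a surjection $q\colon\pi_M\twoheadrightarrow F_2$ induces an isometric injection $q^*\colon H_b^n(F_2)\hookrightarrow H_b^n(\pi_M)$; since $q^*$ commutes with the comparison maps to ordinary cohomology and with passage to the reduced quotient, it restricts to an injection $\overline{EH}_b^n(F_2)\hookrightarrow\overline{EH}_b^n(\pi_M)$. In case~(2), the finite factor $K$ contributes nothing over $\R$, so $\overline{EH}_b^n(F_2\times K)\cong\overline{EH}_b^n(F_2)$, and a hyperbolic embedding induces an injection on exact reduced bounded cohomology through the associated bounded-cohomology retraction; I would invoke this to obtain the same injection $\overline{EH}_b^n(F_2)\hookrightarrow\overline{EH}_b^n(\pi_M)$.

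Next, the construction underlying Proposition~\ref{prop:main_observation} transports a bounded cocycle $\beta$ on $\pi_M$ to a cochain $\Psi(\beta)$ on $\T_M$ obtained by averaging, over base points $x\in U$, the pullback of $\beta$ along the displacement assignment $g\mapsto\theta_x(g)\in\pi_M$ determined by the chosen isotopies. Proposition~\ref{prop:main_observation} guarantees that $\Psi(\beta)$ is $\nu_U$-controlled rather than merely bounded, so $\Psi$ defines a linear map $\overline{H}_b^n(\pi_M)\to H^n_{\nu_U}(\T_M)$ that commutes with the comparison maps and therefore sends $\overline{EH}_b^n(\pi_M)$ into $EH^n_{\nu_U}(\T_M)$. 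Composing with the transfer above yields a candidate map $\Psi\circ q^*\colon\overline{EH}_b^n(F_2)\to EH^n_{\nu_U}(\T_M)$, and it remains to prove it is injective.

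The heart of the argument is to construct a left inverse out of point-pushing maps. Choosing loops in $M$ whose classes in $\pi_M$ are carried by $q$ to the free generators of $F_2$, I would realize a homomorphism $\rho\colon F_2\to\T_M$ by point-pushing a base point in $U$ around these loops; crucially, $\rho$ can be taken with support in a single fixed set of finite volume (a neighborhood of a wedge of the chosen loops), on which $\nu_U$ is bounded. Consequently $\rho^*$ carries $\nu_U$-controlled cochains to bounded cochains on $F_2$, giving a map $\rho^*\colon H^n_{\nu_U}(\T_M)\to H_b^n(F_2)$ compatible with the comparison maps. Because $\theta_x\bigl(\rho(w)\bigr)$ returns the image of $w$ in $\pi_M$ for base points in the pushed region, and because the loops were chosen to split $q$, the composite of $\rho^*$ with reduction agrees with the identity on $\overline{EH}_b^n(F_2)$ up to a coboundary; hence $\Psi\circ q^*$ is injective, which gives the stated bound.

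The step I expect to be the main obstacle is this last one, on two counts. On the geometric side, $\rho$ must be built inside $\T_M$ --- compactly supported, isotopic to the identity, and volume- or symplectic-preserving --- while genuinely pushing the base point around loops representing a section of $q$; arranging a measure-preserving (or symplectic) point-pushing with prescribed homotopy class and fixed finite-volume support is delicate, especially in the $\Diff_0(M,\vol)$ and $\Symp$ cases. On the cohomological side, I must verify that the error in ``$\rho^*\circ\Psi\circ q^*=\id$'' is truly a coboundary after passage to the exact reduced quotient, so that no nonzero class of $\overline{EH}_b^n(F_2)$ is lost in transport; controlling this error simultaneously against both the $\nu_U$-control and the reduced quotient is the crux of the proof.
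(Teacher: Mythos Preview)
Your overall architecture matches the paper: transfer classes from $F_2$ to $\pi_M$, push them to $\T_M$ via $\Phi_{BM}$, and detect nontriviality by pulling back along point-pushing representations into $\T_M$. The substantive gap is precisely the step you flag last, and the resolution is different from what you propose.

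With a \emph{single} point-pushing homomorphism $\rho$, the composite $\rho^*\circ\Phi_{BM}\circ q^*$ does \emph{not} agree with $i^*$ up to a coboundary. The integral defining $\Phi_{BM}$ splits into a ``good'' region (where the pushed point makes a full turn and $\theta_x(\rho(w))=i(w)$) contributing $\mu(\text{good})\cdot i^*(c)$, and a ``bad'' annular region near the boundary of the tubular neighborhood, contributing an error bounded only by $\mu(\text{bad})\cdot\|c\|$. This error is a genuine bounded cocycle of possibly nonzero class; there is no reason it should be exact. What the paper does instead (Lemma~\ref{lem:family of rep}) is build a \emph{family} $\rho_\epsilon$, $\epsilon\in(0,1)$, in which the bad region shrinks to measure zero, so that
\[
\lim_{\epsilon\to 0}\bigl\|\rho_\epsilon^*\bigl(E\Gamma_{\nu_U}(c)\bigr)-\Lambda\, i^*(c)\bigr\|=0.
\]
This limit statement, not an identity up to coboundary, is what drives the proof: if $E\Gamma_{\nu_U}(c)=0$ then $\|i^*(c)\|=0$, hence $\Ker(E\Gamma_{\nu_U}\circ p^*)\subset EN^n(F_2)$, and the dimension inequality follows by comparing quotients.

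A related point: your framing asks for a well-defined map $\overline{EH}_b^n(F_2)\to EH^n_{\nu_U}(\T_M)$, which would require $E\Gamma_{\nu_U}\circ p^*$ to annihilate $EN^n(F_2)$. The paper neither proves nor needs this; it proves the opposite containment $\Ker\subset EN^n(F_2)$, which already yields $\dim EH^n_{\nu_U}(\T_M)\geq\dim\bigl(EH_b^n(F_2)/EN^n(F_2)\bigr)=\dim\overline{EH}_b^n(F_2)$. So drop the attempt to build an honest injection from the reduced quotient and instead bound the kernel. Likewise in case~(2): what the hyperbolic embedding gives (via \cite{frigerio2015extending}) is surjectivity of $j^*\colon EH_b^n(\pi_M)\to EH_b^n(F_2\times K)$, hence of $i^*$, and then the same family/limit argument shows $\Ker(E\Gamma_{\nu_U})\subset\Ker(q\circ i^*)$ with $q$ the reduction map.
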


Here, $\overline{EH}_b^n(G)$ denotes the reduced exact bounded cohomology of $G$ (see Section \ref{subsection:bounded_cohomology}).
If $M$ is of finite volume and $U=M$, this implies (a weak version of) the result of \cite[Theorem A and B]{brandenbursky2019bounded}.
It is known that most 3-manifold groups are acylindrically hyperbolic \cite{MR3368093}.
On the other hand, if $M$ is 3-dimensional and $\pi_1(M)$ is finitely generated, there exists a 3-dimensional compact submanifold $C$ such that the inclusion $C \to M$ is a homotopy equivalence by the Scott core theorem \cite{MR326737}.
Thus we have many examples that satisfy the assumption of the above theorem.
For more details about acylindrically hyperbolic groups, see \cite{MR3430352}.

We also define a variant of the norm-controlled cohomology $H^n_{(d)}(G,\nu)$ for a non-negative integer $d$ such that $H^n_{(0)}(G,\nu)=H^n_{\nu}(G)$.
We will observe several basic properties of this cohomology, such as functoriality (Proposition \ref{prop:functor}). The cochain obtained by Burandenbursky--Marcinkowski's construction for infinite volume manifolds is regarded as an element of this cohomology too.

As a corollary of Theorem \ref{thm:main}, we obtain the following result.

\begin{cor} \label{cor:inf_dim}
  Suppose $M$ and $U$ satisfy the assumption in Theorem \ref{thm:main}. Then
   $EH_{(d)}^3(\mathcal{T}_M , \nu_U) $ is uncountably infinite-dimensional for $d=0,1,2$.
\end{cor}

\begin{figure}[h]
  \begin{center}
    \begin{tikzpicture}[auto]
    \node (01) at (0, 2) {quasimorphisms};
    \node (11) at (7, 2) {bounded cohomology};
    \node (00) at (0, 0) {relative quasimorphisms};
    \node (10) at (7, 0) {norm-controlled cohomology};
    \draw[->] (01) to node {generalization} (00);
    \draw[->] (01) to node {coboundary} (11);
    \draw[->] (11) to node {generalization} (10);
    \draw[->] (00) to node {coboundary} (10);
    \end{tikzpicture}
    \caption{Relation to other notions}
    \label{fig:notions}
  \end{center}
\end{figure}

Our cohomology is also motivated by relative quasimorphisms.
Let $G$ be a group and $\nu \colon G \to [0,\infty)$ a conjugation-invariant norm on $G$. 
A function $\phi \colon G \to \R$ is called a \emph{relative quasimorphism} with respect to a conjugation-invariant norm $\nu$ if there exists a constant $C \geq 0$ such that
\begin{equation} \label{eq:rel_qm}
|\phi (gh) -\phi(g)-\phi(h) | \leq C \cdot \min\{\nu(g),\nu(h)\}
\end{equation}
for all $g,h \in G$.
Relative quasimorphisms first appeared in the paper of Entov and Polterovich \cite{MR2208798}. They appear mainly in symplectic geometry, but they are also found in another context by the author \cite{MR3809596}.
For a quasimorphism $\phi$, its coboundary $\bar{\delta} \phi$ is considered as an element of the second bounded cohomology class. Similarly, a relative quasimorphism $\phi \colon G \to \R$ with respect to a conjugation-invariant norm $\nu$ can be seen as an element of $H^2_{(1)}(G,\nu)$ (see Figure \ref{fig:notions}).
We hope that our cohomology has applications to the study of relative quasimorphisms.

We note that the idea of the technique in Proposition \ref{prop:main_observation} comes from the construction of relative quasimorphisms by Kawasaki \cite{MR3523258}
and Brandenbursky and K\k{e}dra \cite{brandenbursky2018fragmentation}.
Moreover, our construction also can be seen as a generalization of \cite{brandenbursky2018fragmentation}.
We remark that both \cite{brandenbursky2018fragmentation} and \cite{brandenbursky2019bounded} can be seen as a generalization of the construction of Gambaudo--Ghys \cite{GG04} and Polterovich \cite{MR2276956} (Figure \ref{fig:construction}).

\begin{figure}[h]
  \begin{center}
    \begin{tikzpicture}[auto]
    \node[align=center] (01) at (0, 2) {Gambaudo--Ghys\cite{GG04}, \\ Polterovich\cite{MR2276956}};
    \node (11) at (7, 2) {Brandenbursky--Marcinkowski \cite{brandenbursky2019bounded}};
    \node (00) at (0, 0) {Brandenbursky--K\k{e}dra \cite{brandenbursky2018fragmentation}};
    \node (10) at (7, 0) {our construction};
    \draw[->] (01) to node {generalization} (00);
    \draw[->] (01) to node {generalization} (11);
    \draw[->] (11) to node {generalization} (10);
    \draw[->] (00) to node {generalization} (10);
    \end{tikzpicture}
    \caption{Relation to previous constructions}
    \label{fig:construction}
  \end{center}
\end{figure}

\section{Preliminary on group cohomology} \label{section:group_coh}

In this section, we review the definitions and terminology of (bounded) cohomology of groups. We refer to \cite{MR2527432,MR3726870} for more information on this topic.
Throughout this paper, we only consider the cohomology with real coefficients.

\subsection{Group cohomology}


Let $G$ be a group. We consider the space of (inhomogeneous) $n$-cochains
\[\bar{C}^n (G)=\{\bar{c} \colon G^n  \to \mathbb{R}\},\]
and the coboundary map $\bar{\delta} : \bar{C}^{n-1}(G) \to \bar{C}^{n}(G)$ defined by
\[\bar{\delta} \bar{c} (g_1, \dots , g_n) = \bar{c}(g_2 ,\dots, g_n ) + \sum_{i=1}^{n-1}(-1)^i \bar{c}(g_1,\dots, g_ig_{i+1},\dots,g_n) +(-1)^n \bar{c}(g_1,\dots,g_{n-1}) \]
for $\bar{c} \in \bar{C}^n(G)$ and $g_1 ,\dots, g_n \in G$. The cohomology of this cochain complex is called the \emph{(group) cohomology} of $G$ and denoted by $H^n(G)$.

There is another definition of this cohomology.
A map $c \colon G^{n+1} \to \mathbb{R}$ is said to be \emph{homogeneous} if $c( h g_0 ,\dots, h g_n )=c(g_0,\dots, g_n)$ for every $g_0,\dots, g_n,h \in G$.
The space of (homogeneous) $n$-cochains is
\[ C^n (G)=\{c \colon G^{n+1} \to \mathbb{R} \mid c \; \text{is homogeneous} \}, \]
and the coboundary map $\delta \colon C^{n-1}(G) \to C^n (G)$ is defined by
\[ \delta c (g_0, \dots , g_n) = \sum_{i=0}^{n}(-1)^i c(g_0,\dots, \widehat{g_i},\dots,g_n) \]
for $c \in C^n(G)$ and $g_0 ,\dots, g_n \in G$, where $\widehat{g_i}$ means that we omit the entry $g_i$.
The cohomology of $(C^n (G), \delta)$ also defines $H^n (G)$.

The correspondence between an inhomogeneous cochain $\bar{c} \in \bar{C}^n(G)$
and homogenous one $c \in C^n(G)$ is the following:
\begin{align}
  \label{eq:inhomog to homog}
\bar{c}(g_1,g_2,\dots,g_n) =  c(1,g_1,g_1g_2,\dots,g_1g_2\dots g_n), \\
  \label{eq:homog to inhomog}
c(g_0,g_1,\dots, g_n) = \bar{c}(g_0^{-1}g_1, g_1^{-1}g_2,\dots, g_{n-1}^{-1}g_n).
\end{align}

We call a cochain $c \in C^n(G)$ \emph{alternating} if
\[c(g_{\sigma(0)}, \dots , g_{\sigma(n)})=\sgn(\sigma)c(g_0,\dots,g_n)\]
for any $g_0,\dots,g_n \in G$ and $\sigma \in \mathfrak{S}_{n+1}$, where $\sgn(\sigma) \in \{\pm 1\}$ is the sign of $\sigma$.
Let $C^n_{\alt}(G)$ denote the set of alternating $n$-cochains. Then $(C^n_{\alt}(G), \delta)$ is a subcomplex of $(C^n(G), \delta)$. It is known that the cohomology of $(C^n_{\alt}(G), \delta)$ also defines $H^n(G)$.

\subsection{Bounded cohomology} \label{subsection:bounded_cohomology}

We review the definition of bounded cohomology. We only mention the inhomogeneous case but the homogenous case is defined similarly.
If we consider the subcomplex
\[\bar{C}^n_b (G)=\{\bar{c} \colon G^n  \to \mathbb{R} \mid \bar{c} \; \text{is bounded} \}\]
of $\bar{C}^n$, the homology of the complex $(\bar{C}^n_b (G), \bar{\delta})$ is called the \emph{bounded cohomology} of $G$ and denoted by $H^n_b(G)$.
The natural inclusion $\bar{C}^n_b(G) \to \bar{C}^n(G)$ induces the homomorphism
$H^n_b(G) \to H^n(G)$ called the \emph{comparison map}. The kernel of the comparison map $H^n_b(G) \to H^n(G)$ is called the \emph{exact bounded cohomology} and denoted by $EH^n_b(G)$.

For a cochain $\bar{c} \in \bar{C}^n_b (G)$, we define the \emph{norm} $\| \bar{c} \|$ of $\bar{c} $ by
\[ \| \bar{c} \| = \sup_{g_1,\dots,g_n \in G} | \bar{c} (g_1,\dots,g_n)|. \]
This norm induces a natural seminorm $\| \cdot \|$ on $H_b^n(G)$ defined by 
\[ \| \alpha \| = \inf \| \bar{c} \| \]
for $\alpha \in H^n_b(G)$, where the infimum is taken over all cochains $\bar{c} \in \bar{C}^n_b (G)$ representing the cohomology class $\alpha$. 

Let $N^n(G)$ denote the norm zero subspace of $H^n_b(G)$, i.e.,
\[N^n(G) = \{ \alpha \in H^n_b(G) \mid  \|\alpha \| = 0 \}. \]
The \emph{reduced cohomology} $\overline{H}_b^n(G)$ is defined by the quotient $H^n_b(G)/ N^n(G)$.
The \emph{reduced exact cohomology} $\overline{EH}_b^n(G)$ is defined by $EH^n_b(G)/ EN(G)$, where $EN^n(G)= N^n(G) \cap EH^n_b(G)$.

We can consider the homogeneous complex $C^{\bullet}_b(G)$, alternating homogenous and inhomogeneous subcomplex $C^{\bullet}_{b,\alt}(G)$ and $\bar{C}^{\bullet}_{b,\alt}(G)$, and they also define the cohomology $H^{\bullet}_b(G)$ endowed with its natural seminorm.

\section{Norm-controlled cohomology}

\subsection{Definition of the cohomology} \label{subsection:def_of_coh}

We refer to a group equipped with a pseudo norm as a \emph{normed group}.
We define the norm-controlled cohomology $H^{\bullet}_{(d)}(G,\nu)$ for a normed group $(G,\nu)$ and a non-negative integer $d$.

\begin{definition}
    For a cochain $\bar{c} \in \bar{C}^n(G)$ and a function $\mu \colon G^n \to [0,\infty)$, we say that $\bar{c}$ is \emph{Lipschitz with respect to $\mu$} if there exist constants $C,D \geq 0$ such that for every $g_1,\dots,g_n \in G$
      \[ |\bar{c} (g_1,\dots,g_n )| \leq C \cdot \mu(g_1,\dots,g_n ) + D. \]
\end{definition}

\begin{definition}
  For a normed group $(G,\nu)$ and non-negative integers $n$ and $d$, we define $\bar{C}^{n}_{(d)}(G,\nu)$ as follows.
  \begin{itemize}
    \item   If $n>d$, we define $\bar{C}^{n}_{(d)}(G,\nu)$ as the set of Lipschitz cochains $\bar{c} \in \bar{C}^n(G)$ with respect to $\nu_{(n,d)}$,
      where $\nu_{(n,d)} : G^n \to [0,\infty)$ is defined by
      \begin{align*}
        & \nu_{(n,d)} (g_1,\dots,g_n) =\min_{\substack{I \subset \{1,\dots,n\} \\ \# I=n-d} } \left\{ \sum_{i \in I}\nu(g_i) \right\} \\
     = & \min_{1\leq i_i<\dots<i_d\leq n}  \left\{ \nu(g_1)+\dots + \widehat{\nu(g_{i_1})}+\dots +\widehat{\nu(g_{i_d})} + \dots +\nu(g_n) \right\}.
    \end{align*}
    \item If $d \geq n$, we define $\bar{C}^{n}_{(d)}(G,\nu)=\bar{C}^n_b(G)$.
  \end{itemize}
\end{definition}

Note that $\bar{c} \in \bar{C}^{n}_{(0)}(G,\nu)$ implies
  \[|\bar{c}(g_1,\dots,g_n)| \leq C \cdot \{ \nu(g_1) + \dots + \nu(g_n) \} + D\]
and $\bar{c} \in \bar{C}^{n}_{(n-1)}(G,\nu)$ implies
  \[|\bar{c}(g_1,\dots,g_n)| \leq C \cdot \min \{ \nu(g_1), \dots, \nu(g_n) \} + D.\]

\begin{lem}
  For any integer $d\geq 0$, $(\bar{C}^{n}_{(d)}(G,\nu), \bar{\delta})$ is a subcomplex of $(\bar{C}^n(G) , \bar{\delta})$.
\end{lem}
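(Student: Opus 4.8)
The plan is to verify the only substantive point: that $\bar{\delta}$ maps $\bar{C}^{n}_{(d)}(G,\nu)$ into $\bar{C}^{n+1}_{(d)}(G,\nu)$; the relation $\bar{\delta}^2=0$ is then inherited from the ambient complex $(\bar{C}^\bullet(G),\bar{\delta})$. The case $d\geq n$ is immediate, since every bounded cochain is Lipschitz with respect to any function $\mu$ (take $C=0$ and $D=\|\bar{c}\|$), so $\bar{C}^{n+1}_b(G)\subseteq\bar{C}^{n+1}_{(d)}(G,\nu)$ and $\bar{\delta}$ preserves boundedness. So I would concentrate on $n>d$. Throughout, for nonnegative reals $a_1,\dots,a_m$ and an integer $k\geq 0$, let $s_k(a_1,\dots,a_m)$ denote the sum of the $k$ smallest of the $a_j$ (and the full sum when $k\geq m$); thus $\nu_{(n,d)}(g_1,\dots,g_n)=s_{n-d}(\nu(g_1),\dots,\nu(g_n))$. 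Given $\bar{c}\in\bar{C}^{n}_{(d)}(G,\nu)$ with constants $C,D$, I would expand $\bar{\delta}\bar{c}(g_1,\dots,g_{n+1})$ into its $n+2$ summands and bound each one by $C\,\nu_{(n+1,d)}(g_1,\dots,g_{n+1})+D$; summing then yields the bound with constants $(n+2)C$ and $(n+2)D$.

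The summands are of two kinds. For the two \emph{face} terms $\bar{c}(g_2,\dots,g_{n+1})$ and $\bar{c}(g_1,\dots,g_n)$, I must show $s_{n-d}\big(T\setminus\{a\}\big)\leq s_{n-d+1}(T)$, where $T$ is the multiset $\{\nu(g_1),\dots,\nu(g_{n+1})\}$ and $a$ is the deleted norm. For the $n$ \emph{merge} terms $\bar{c}(g_1,\dots,g_ig_{i+1},\dots,g_{n+1})$, the norms of the arguments form $R\cup\{\nu(g_ig_{i+1})\}$ with $R=\{\nu(g_j):j\neq i,i+1\}$. Using subadditivity $\nu(g_ig_{i+1})\leq\nu(g_i)+\nu(g_{i+1})$ together with the fact that $s_k$ is nondecreasing in each entry, I would bound $s_{n-d}\big(R\cup\{\nu(g_ig_{i+1})\}\big)$ by $s_{n-d}\big(R\cup\{\nu(g_i)+\nu(g_{i+1})\}\big)$, reducing everything to the key combinatorial inequality below with $k=n-d$.

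The heart of the argument, and the step I expect to be the main obstacle, is the following lemma: for any multiset $R$ of nonnegative reals and any $x,y\geq 0$,
\[ s_k\big(R\cup\{x+y\}\big)\leq s_{k+1}\big(R\cup\{x,y\}\big). \]
A naive case analysis on the sorted orders is unpleasant, because a merged entry $x+y$ can be large while a split entry $x$ is small, so I would instead argue by a subset injection. Take any $(k+1)$-element sub-multiset $S$ of $R\cup\{x,y\}$ and produce a $k$-element sub-multiset $S'$ of $R\cup\{x+y\}$ with $\sum S'\leq\sum S$: if $S$ contains neither $x$ nor $y$, delete its largest element; if it contains exactly one of them, delete that element; if it contains both, replace $\{x,y\}$ by the single element $x+y$ (here $\sum S'=\sum S$). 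Since $s_k(R\cup\{x+y\})\leq\sum S'\leq\sum S$ for every such $S$, minimizing over $S$ gives the inequality. The face-term inequality $s_{n-d}(T\setminus\{a\})\leq s_{n-d+1}(T)$ follows by the same style of injection (delete the largest element of an $S$ avoiding $a$, or delete $a$ itself otherwise).

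Combining these, each of the $n+2$ summands has argument-norms whose $s_{n-d}$-value is at most $\nu_{(n+1,d)}(g_1,\dots,g_{n+1})=s_{n+1-d}(T)$, so each summand is bounded by $C\,\nu_{(n+1,d)}(g_1,\dots,g_{n+1})+D$. Hence $\bar{\delta}\bar{c}$ is Lipschitz with respect to $\nu_{(n+1,d)}$, i.e.\ $\bar{\delta}\bar{c}\in\bar{C}^{n+1}_{(d)}(G,\nu)$, which is exactly what the subcomplex assertion requires. I would only need to double-check the degenerate size conventions (when $k+1$ exceeds the relevant cardinalities all the inequalities collapse to equalities of full sums), but these cause no trouble.
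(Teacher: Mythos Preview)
Your proposal is correct and follows the same approach as the paper: expand $\bar{\delta}\bar{c}$ and bound each of the $n+2$ summands by comparing $\nu_{(\cdot,d)}$ of the face/merge arguments to $\nu_{(\cdot,d)}$ of the full tuple, then sum. The only difference is level of detail---the paper simply asserts the three inequalities (two face terms and the merge term, the latter ``since $\nu(g_ig_{i+1})\leq\nu(g_i)+\nu(g_{i+1})$''), whereas you make the underlying combinatorial lemma $s_k(R\cup\{x+y\})\leq s_{k+1}(R\cup\{x,y\})$ explicit and give a clean subset-injection proof of it.
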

\begin{proof}
  It is sufficient to prove that $\bar{\delta}(\bar{C}^{n-1}_{(d)}(G,\nu))\subset \bar{C}^{n}_{(d)}(G,\nu)$ for the case $n-1>d$.

Let $g_1,\dots,g_n$ be elements in $G$. It is easy to see that
\begin{gather*}
  \nu_{(n-1,d)}(g_2 ,\dots, g_n ) \leq \nu_{(n,d)}(g_1\dots,g_n), \\
  \nu_{(n-1,d)}(g_1 ,\dots, g_{n-1} ) \leq \nu_{(n,d)}(g_1,\dots,g_n).
\end{gather*}
Since $\nu(g_ig_{i+1}) \leq \nu(g_i) + \nu(g_{i+1})$,
\[\nu_{(n-1,d)}(g_1,\dots, g_ig_{i+1},\dots,g_n) \leq \nu_{(n,d)}(g_1,\dots, g_i, g_{i+1},\dots,g_n) \]
for $i=1,2,\dots, n-1$.
  Therefore, for $\bar{c} \in \bar{C}^{n-1}_{(d)}(G,\nu)$,
\begin{align*}
  & \quad |\bar{\delta}\bar{c}(g_1,\dots,g_n)| \\
  &\leq |\bar{c}(g_2 ,\dots, g_n )| + \sum_{i=1}^{n-1} |\bar{c}(g_1,\dots, g_ig_{i+1},\dots,g_n)| +|\bar{c}(g_1,\dots,g_{n-1})| \\
  &\leq (n+1)\{ C \cdot \nu_{(n,d)}(g_1,\dots, g_n) + D \}. \qedhere
\end{align*}
\end{proof}

\begin{definition}
For 
a normed group $(G,\nu)$
and an integer $d\geq 0$, we define the \emph{norm-controlled cohomology} $H^{\bullet}_{(d)}(G,\nu)$ \emph{of level $d$} to be the cohomology of the cochain complex $(\bar{C}^{n}_{(d)}(G,\nu), \bar{\delta})$.
\end{definition}

Note that the complexes $\{ \bar{C}^{n}_{(d)}(G,\nu) \}_{n,d}$ can be seen as a filtered complex, i.e., $\bar{C}^{n}_{(d)}(G,\nu) \subset \bar{C}^{n}_{(d')}(G,\nu)$ if $d \geq d'$.

By the correspondence (\ref{eq:homog to inhomog}),
we can define the homogeneous norm-controlled cochain complex $C^n_{(d)}(G,\nu)$ as the set of cochains $c \in C^n(G)$ which satisfies the following:
there exist constants $C,D \geq 0$ such that
\[ |c(g_1,\dots,g_n)| \leq  C \cdot \min_{\substack{I \subset \{1,\dots,n\} \\ \# I=n-d} } \left\{ \sum_{i \in I}\nu(g_{i-1}^{-1} g_i) \right\} + D.\]
We can also define the inhomogeneous (resp. homogeneous) alternating subcomplex $\bar{C}^{\bullet}_{(d),\alt}(G,\nu)$ (resp. $C^{\bullet}_{(d),\alt}(G,\nu)$) and they also define the cohomology $H^{\bullet}_{(d)}(G,\nu)$.

\begin{example}

Let $\mathbb{Z}^n$ be the free abelian group of rank $n$.
For a positive integer $l\leq  n$, define a (pseudo) norm $\nu_l$ on $\mathbb{Z}^n$ by
\[\nu_l(m_1,\dots,m_l,\dots,m_n) = |m_1| +\dots+|m_l| \]
for $m_1,\dots,m_n \in \mathbb{Z}$. Now we compute $H^1_{\nu_{l}}(\mathbb{Z}^n)$.
Note that $H^1_{\nu}(G)=H^1_{(0)}(G,\nu)=\mathrm{Ker} (\bar\delta \colon \bar{C}^1_{(d)} \to \bar{C}^2_{(d)} ) $ is the set of Lipschitz homomorphisms with respect to $\nu$.

We define a homomorphism $\phi_i \colon \mathbb{Z}^n \to \mathbb{R}$ by
$\phi_i (m_1,\dots,m_l,\dots,m_n) = m_i$.
$\mathrm{Hom}(\mathbb{Z}^n,\mathbb{R}) \cong \mathbb{R}^n$ is generated by $\phi_1, \dots , \phi_n$.
It is easy to see that $\phi_i$ is Lipschitz with respect to $\nu_{l}$ for $i\leq l$ and not for $i>l$.
Thus, $H^1_{\nu_{l}}(\mathbb{Z}^n)$ is generated by $\phi_1, \dots , \phi_l$ and isomorphic to $\mathbb{R}^l$.
\end{example}

\begin{example}
Let $\phi$ be an unbounded relative quasimorphism (see \eqref{eq:rel_qm}) on a group $G$ with respect to a conjugation-invariant norm $\nu$. If $\phi$ is at infinite distance from homomorphisms,
then $\bar{\delta}\phi$ defines a non-trivial cohomology class in $EH^2_{(1)}(G,\nu)$. Thus, for the following cases, $EH^2_{b}(G)$ is trivial but $EH^2_{(1)}(G,\nu)$ is non-trivial for a certain norm $\nu$.
\begin{itemize}
  \item $G$ is the identity component of the group of symplectomorphisms $\Symp_0^c(\R^{2n},\omega_0)$ of the standard symplectic space $(\R^{2n},\omega_0)$ with compact support \cite{MR3523258}.
  \item $G$ is the infinite braid group $B_{\infty}$ \cite{MR3809596}.
  \item $G$ is the Hamiltonian diffeomorphism group ${\rm Ham}(T^{\ast}\Sigma_g \times \R^{2n})$ of $T^{\ast}\Sigma_g \times \R^{2n}$, where $\Sigma_g$ is a closed surface of genus $g > 1$ \cite{brandenbursky2018fragmentation}.
\end{itemize}
\end{example}

\subsection{Functoriality}

We show that our cohomology is a functor for a certain category.
Recall that a normed group $(G,\nu)$ is a pair of a group $G$ and a (pseudo) norm $\nu$ on $G$.

\begin{definition}
  Let $(G,\nu_G)$ and $(H,\nu_H)$ be normed groups.
  A homomorphism $\phi \colon G \to H$ is said to be \emph{Lipschitz} if there exist $C,D \geq 0$ such that for all $g\in G$,
  \[\nu_H(\phi(g)) \leq C \cdot \nu_G(g)+D. \]
\end{definition}

\begin{definition}
  We define the \emph{category $\mathbf{NGrp}$ of normed groups} as follows.
\begin{itemize}
  \item The objects $Ob(\mathbf{NGrp})$ are normed groups.
  \item The morphisms $Mor(\mathbf{NGrp})$ are Lipschitz homomorphisms $\phi \colon (G,\nu_G) \to (H,\nu_H)$ between normed groups
  $(G,\nu_G)$ and $(H,\nu_H)$.
\end{itemize}
\end{definition}

The composition of morphisms is the composition of group homomorphisms, and hence the associativity holds.
For every $(G,\nu) \in Ob(\mathbf{NGrp})$, there exists the identity $\mathrm{id}_G:(G,\nu)\to (G,\nu)$ in $Mor(\mathbf{NGrp})$. Hence $\mathbf{NGrp}$ is a category.

Let $H^{n}_{(d)}$ denote the correspondence from a normed group $(G,\nu)$ to $H^{n}_{(d)}(G,\nu)$.

\begin{prop}\label{prop:functor}
  The correspondence $H^{n}_{(d)}$ is a contravariant functor from the category of normed groups $\mathbf{NGrp}$ to the category of real vector spaces $\mathbf{Vect}_{\mathbb{R}}$.
\end{prop}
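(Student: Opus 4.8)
The plan is to show that a Lipschitz homomorphism $\phi \colon (G,\nu_G) \to (H,\nu_H)$ induces a cochain pullback that respects the norm-controlled subcomplexes, and then invoke the standard fact that pullback of cochains commutes with the coboundary. First I would define, for a group homomorphism $\phi \colon G \to H$, the usual pullback on inhomogeneous cochains $\phi^* \colon \bar{C}^n(H) \to \bar{C}^n(G)$ by
\[ (\phi^* \bar{c})(g_1,\dots,g_n) = \bar{c}(\phi(g_1),\dots,\phi(g_n)). \]
It is a standard computation that $\phi^*$ commutes with $\bar\delta$ (this holds for \emph{any} group homomorphism, with no Lipschitz hypothesis), so $\phi^*$ is a cochain map on the full complex $\bar{C}^\bullet(-)$.

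The main content, then, is to verify that when $\phi$ is Lipschitz, $\phi^*$ restricts to a map $\bar{C}^n_{(d)}(H,\nu_H) \to \bar{C}^n_{(d)}(G,\nu_G)$. I would handle the case $n > d$ as the essential one (the case $d \geq n$ reduces to checking that $\phi^*$ preserves boundedness, which is immediate since $\phi^*$ never increases the sup-norm). Take $\bar{c} \in \bar{C}^n_{(d)}(H,\nu_H)$ with constants $C,D$, and take $\phi$ Lipschitz with constants $C',D'$, so $\nu_H(\phi(g)) \leq C'\nu_G(g) + D'$. The key estimate is that for any index set $I \subset \{1,\dots,n\}$ with $\#I = n-d$,
\[ \sum_{i \in I} \nu_H(\phi(g_i)) \leq C' \sum_{i \in I} \nu_G(g_i) + (n-d)D'. \]
Taking the minimum over all such $I$ on both sides gives $\nu_{(n,d)}^{H}(\phi(g_1),\dots,\phi(g_n)) \leq C'\,\nu_{(n,d)}^{G}(g_1,\dots,g_n) + (n-d)D'$, where the superscripts record which norm defines $\nu_{(n,d)}$. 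Combining with the Lipschitz bound on $\bar{c}$ then yields
\[ |(\phi^*\bar{c})(g_1,\dots,g_n)| \leq C\,\nu_{(n,d)}^{H}(\phi(g_1),\dots,\phi(g_n)) + D \leq CC'\,\nu_{(n,d)}^{G}(g_1,\dots,g_n) + \bigl(C(n-d)D' + D\bigr), \]
so $\phi^*\bar{c}$ lies in $\bar{C}^n_{(d)}(G,\nu_G)$ with explicit constants.

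Granting the previous step, $\phi^*$ is a cochain map between the norm-controlled subcomplexes, hence induces a linear map $H^n_{(d)}(\phi) \colon H^n_{(d)}(H,\nu_H) \to H^n_{(d)}(G,\nu_G)$ on cohomology, which is contravariant as required. Functoriality on morphisms, namely $(\psi\circ\phi)^* = \phi^*\circ\psi^*$ and $\id_G^* = \id$, is inherited at the cochain level from the corresponding identities for ordinary group-cohomology pullbacks, and these descend to cohomology; linearity of each induced map is clear since $\phi^*$ is $\R$-linear on cochains. I would close by remarking that the same pullback preserves the alternating subcomplexes $\bar{C}^\bullet_{(d),\alt}$ (and equivalently their homogeneous counterparts), so the functor is well defined independently of which model computes $H^n_{(d)}$. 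I expect no genuine obstacle here: the only point requiring care is the bookkeeping of constants in the minimum-over-$I$ estimate, and in particular the observation that the Lipschitz constant $C'$ passes \emph{outside} the minimum precisely because it is the same for every term, while the additive error accrues the harmless factor $(n-d)$.
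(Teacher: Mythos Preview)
Your argument is correct and follows essentially the same route as the paper: define the pullback $\phi^*$ on inhomogeneous cochains, check it preserves the subcomplex $\bar{C}^n_{(d)}$ via the estimate $\nu^H_{(n,d)}(\phi(g_1),\dots,\phi(g_n)) \leq C'\,\nu^G_{(n,d)}(g_1,\dots,g_n) + (n-d)D'$, and then use that $\phi^*$ commutes with $\bar\delta$. The paper differs only cosmetically---it verifies the coboundary compatibility by an explicit expansion of $\phi^*(\bar\delta\bar c')$ rather than citing it as standard, and it omits the bookkeeping you supply (the case $d\geq n$, the explicit constant $(n-d)D'$, and the composition/identity axioms).
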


If two norms on a same group are bi-Lipschitz, then they define the same norm-controlled cohomology.

\begin{cor} \label{cor:biLip}
  Let $G$ be a group with (pseudo) norms $\nu_1$ and $\nu_2$. If both $\id_G \colon (G,\nu_1) \to (G,\nu_2)$ and $\id_G \colon (G,\nu_2) \to (G,\nu_1)$ are Lipschitz, then $\id_G^{\ast} \colon H^n_{(d)}(G,\nu_2) \to H^n_{(d)}(G,\nu_1)$
  is an isomorphism.
\end{cor}

The proof of Proposition \ref{prop:functor} and Corollary \ref{cor:biLip} are elementary and will be omitted. 

\section{Cocycles on transformation groups}

\subsection{Brandenbursky--Marcinkowski construction}

We briefly review the construction of Brandenbursky and Marcinkowski \cite{brandenbursky2019bounded}.
Let $M$ be a finite volume complete Riemannian manifold and $\mu$ the measure on $M$ associated with the Riemannian structure. Fix a base point $z \in M$.
Let $\mathrm{Homeo}_0(M, \mu)$ denote the identity component of the group of homeomorphisms of $M$ with compact support that preserve the measure $\mu$.
Recall that $\pi_M$ denotes the quotient group $\pi_1(M,z)/Z(\pi_1(M,z))$, where $Z(G)$ denotes the center of a group $G$.

For a subgroup $\mathcal{T}_{M}$ of $\mathrm{Homeo}_0(M,\mu)$, they constructed a map $\Gamma_b^\bullet \colon H_b^\bullet( \pi_M ) \to H_b^\bullet( \mathcal{T}_{M} ) $ as follows.
Let $C$ denote the cut locus of $z$, i.e., 
the closure of the set of all points $x\in M$ that have two or more distinct shortest paths in $M$ from $x$ to $z$. 
For $x \in M$ and $g \in \mathcal{T}_{M}$ such that $x \not\in C$ and $g(x) \not\in C$,
we define $\gamma(g,x) \in \pi_1(M,z)$ by the concatenation of the minimizing geodesic between $z$ and $x$, the path defined by $\{ g_t(x)\}_{0\leq t\leq 1}$, where
$\{ g_t \}_{0\leq t\leq 1}$ is an isotopy of $g$ with $g_0=\id$ and $g_1=g$, and the minimizing geodesic between $g(x)$ and $z$. Then $\gamma(g,x)$ is defined uniquely up to center for any choice of isotopies. Thus it defines an element of $\pi_M$.

Since the measure $\mu(C)$ of the cut locus $C$ is zero and the map $\gamma(f,\cdot) \colon M \to \pi_M$ has essentially finite image for $f \in \T_M$, we can define the map $\Phi_{BM} \colon C^n( \pi_M ) \to C^n( \mathcal{T}_{M} ) $ by
\[\Phi_{BM}(c)(g_0,\dots,g_n) = \int_M c(\gamma(g_0,x),\dots,\gamma(g_n,x)) d\mu(x)\]
for $c \in C^n( \pi_M )$ and $g_0,\dots,g_n \in \mathcal{T}_{M}$.
The map $\Gamma^n \colon H^n( \pi_M ) \to H^n( \mathcal{T}_{M})$ is defined as the induced map from
$\Phi_{BM}$.
If $c \in C_b^n(\pi_M)$ is a bounded cochain, then $\Phi_{BM}(c)$ is also a bounded cochain since
\[ |\Phi_{BM}(c)(g_0,\dots,g_n)| \leq \| c \| \cdot \vol(M) < +\infty \]
for any $g_0,\cdots,g_n \in \pi_M$.
Hence $\Phi_{BM}$ induces the map $\Gamma_b^n \colon H_b^n( \pi_M ) \to H_b^n( \mathcal{T}_{M} ) $. We also obtain the map of the exact part $E\Gamma_b^n \colon EH_b^n( \pi_M ) \to EH_b^n( \mathcal{T}_{M})$.

\subsection{Main result}

We consider the above construction for the case that $M$ has infinite volume.
In this case, the map $\Phi_{BM}$ is well-defined on $C_{b,\alt}(\pi_M)$ since we consider homeomorphisms with compact support. On the other hand, the image $\Phi_{BM}(c)$ of a bounded cochain $c \in C_{b,\alt}(\pi_M)$ might not be a bounded cochain. We prove that, however, the image is a norm-controlled cochain with respect to a fragmentation norm (Proposition \ref{prop:main_observation}).


\begin{prop} \label{prop:main_observation}
For $c \in C_{b,\alt}^{n}( \pi_M )$, there exists $C \geq0$ such that
\[|\Phi_{BM} (c)(g_0,\dots,g_n)| \leq C \cdot \min_{0\leq i<j\leq n} \{ \nu_U(g_i^{-1}g_j) \}. \]
for all $g_0,\dots, g_n \in \T_M$. In particular, $\Phi_{BM}(c) \in C^n_{(n-1),\alt}(\T_M,\nu_U)$.
\end{prop}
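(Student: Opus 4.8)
The plan is to bound $|\Phi_{BM}(c)(g_0,\dots,g_n)|$ by $\|c\|\cdot\vol(U)\cdot\nu_U(g_0^{-1}g_1)$ first, and then to recover the full pairwise bound from the alternating property. Since $c$ is alternating and integration is linear, $\Phi_{BM}(c)$ is itself alternating; hence for any $0\le i<j\le n$ one may apply a permutation carrying the pair $(i,j)$ to $(0,1)$ and reduce, up to sign, to the pair $(g_0,g_1)$, so that the minimum over all pairs follows once the single-pair estimate is in hand. Moreover, because $c$ is alternating it vanishes whenever two of its arguments coincide, so the integrand $c(\gamma(g_0,x),\dots,\gamma(g_n,x))$ is supported on the set $E=\{x\in M:\gamma(g_0,x)\ne\gamma(g_1,x)\}$ and is bounded there by $\|c\|$. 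The whole problem therefore reduces to the measure estimate
\[ \mu(E)\le \vol(U)\cdot\nu_U(g_0^{-1}g_1). \]

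The key geometric input is a locality property of $\gamma$. Using the cocycle-type identity $\gamma(gh,x)=\gamma(g,h(x))\,\gamma(h,x)$ in $\pi_M$, obtained by choosing an isotopy of $gh$ that first runs an isotopy of $h$ and then an isotopy of $g$, I would establish: if $h\in\mathcal{S}_V$ for an open set $V$, then $\gamma(gh,x)=\gamma(g,x)$ for every $x\notin V$ and every $g\in\T_M$. Indeed, for $x\notin V$ the isotopy $\{h^t\}$ with $\supp(h^t)\subset V$ fixes $x$ throughout, so $\gamma(h,x)=1$ and $h(x)=x$, and the identity collapses to $\gamma(gh,x)=\gamma(g,x)$. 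It is crucial here that $\mathcal{S}_V$ is defined through the support of an entire isotopy rather than the support of $h$ alone, since $\gamma(h,x)$ sees the whole path $\{h^t(x)\}$.

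Next I would telescope along a minimal fragmentation. Writing $k=\nu_U(g_0^{-1}g_1)$ and $g_0^{-1}g_1=(f_1^{-1}h_1f_1)\cdots(f_k^{-1}h_kf_k)$ with $h_i\in\mathcal{S}_U$, set $\phi_i=f_i^{-1}h_if_i$ and $w_j=g_0\phi_1\cdots\phi_j$, so $w_0=g_0$ and $w_k=g_1$. Conjugating the isotopy of $h_i$ by $f_i$ shows $\phi_i\in\mathcal{S}_{f_i^{-1}(U)}$, and since each element of $\T_M$ preserves $\mu$ we have $\mu(f_i^{-1}(U))=\vol(U)$. Applying the locality property with $g=w_{j-1}$, $h=\phi_j$, $V=f_j^{-1}(U)$ gives $\gamma(w_{j-1},x)=\gamma(w_j,x)$ for all $x\notin f_j^{-1}(U)$, whence $E\subset\bigcup_{j=1}^k f_j^{-1}(U)$ and $\mu(E)\le k\cdot\vol(U)$. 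Combined with the first paragraph this yields the bound with $C=\|c\|\,\vol(U)$, and the membership $\Phi_{BM}(c)\in C^n_{(n-1),\alt}(\T_M,\nu_U)$ follows because the minimum over all pairs is no larger than the minimum over the consecutive differences $\nu_U(g_{i-1}^{-1}g_i)$ appearing in the definition of the level-$(n-1)$ complex.

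The step I expect to be the main obstacle is the careful verification of the locality property: keeping track of the distinction between $\supp(h)$ and the support of the isotopy, and confirming that the cocycle identity holds on the nose in $\pi_M$, so that the center-ambiguity in the definition of $\gamma$ does not interfere. The use of measure-preservation to pass from $U$ to its translates $f_i^{-1}(U)$ without enlarging the measure is the feature that makes conjugation harmless for the estimate, and it should be emphasized as the reason the fragmentation norm, and not merely the support in $U$, controls $\mu(E)$.
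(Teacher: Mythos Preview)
Your argument is correct and follows essentially the same route as the paper: both use that an alternating cochain vanishes when two entries agree, bound the set where $\gamma(g_i,x)\neq\gamma(g_j,x)$ by the union of $m$ translates of $U$ coming from a minimal fragmentation of $g_i^{-1}g_j$, and invoke measure-preservation to conclude $|\Phi_{BM}(c)(g_0,\dots,g_n)|\le \|c\|\cdot\vol(U)\cdot\nu_U(g_i^{-1}g_j)$. The only cosmetic differences are that the paper treats an arbitrary pair $(i,j)$ directly rather than reducing to $(0,1)$ via a permutation, and it builds a single isotopy $g_j^t=g_i^t(f_1^{-1}h_1^tf_1)\cdots(f_m^{-1}h_m^tf_m)$ to see $\gamma(g_i,x)=\gamma(g_j,x)$ off the bad set in one stroke, whereas you telescope through intermediate $w_j$ using the cocycle identity; these are equivalent packagings of the same idea.
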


\begin{proof}
  We fix $i$ and $j$ ($0\leq i < j \leq n$).
  Assume that $\nu_U(g_i^{-1}g_j) = m$.
  Then we can write $g_i^{-1}g_j = (f_1^{-1}h_1f_1) \dots (f_m^{-1} h_m f_m) $, where $h_k \in \mathcal{S}_U$ and $f_k \in \T_M$ for $k = 1,\dots,m$.
  Take an isotopy $\{ (g_i)_t \}_{t}$ of $g_i$
  and isotopies $\{ (h_1)_t \}_{t}, \dots, \{ (h_m)_t \}_{t}$ for $h_1 ,\dots, h_m$ such that $\supp((h_k)_t) \subset U$ for every $t\in[0,1]$ and $k=1,\dots,m$.
  We define $(g_j)_t = (g_i)_t (f_1^{-1}(h_1)_tf_1) \cdots (f_m^{-1} (h_m)_t f_m)$.
  Then $\{ (g_j)_t \}_t$ is an isotopy of $g_j$.
  Set
  \[U_{ij} = \bigcup_{0\leq t\leq1} \supp \left((g_i)_t^{-1} (g_j)_t \right)=\bigcup_{0\leq t\leq1} \supp \left( (f_1^{-1}(h_1)_tf_1) \cdots (f_m^{-1} (h_m)_t f_m) \right).\]
   Note that $U_{ij} \subset f_1(U) \cup \dots \cup f_m(U)$. If $x \not\in U_{ij}$,
  $(g_i)_t(x)=(g_j)_t(x)$ for every $t \in [0,1]$.
  Thus $\gamma(g_i,x)=\gamma(g_j,x) \in \pi_M$.
  Since $c$ is alternating, $c( \gamma(g_0,x),\dots,\gamma(g_n,x))=0$.
  Therefore,
  \[ |\Phi_{BM}(c)(g_0,\dots,g_n)| \leq \vol(U_{ij}) \cdot \|c \| \leq m \cdot \vol(U) \cdot  \|c \|. \]
  Since we can arbitrarily take $i$ and $j$, the inequality holds for $C=\vol(U) \cdot  \|c \|$.
\end{proof}

\begin{remark}
  For $d \leq n-1$, the map $\Phi_{BM} \colon C^n_{b,\alt}(\pi_M) \to C^n_{(d),\alt}(\T_M, \nu_U)$ is well-defined.
  However, if $d=n-1$, $\Phi_{BM}$ does not induce the map $H^n_{b}(\pi_M) \to H^n_{(n-1)}(\T_M, \nu_U)$ because the image of $\bar{B}^n_{(n-1)}(\pi_M)$ might not be in $\bar{B}^n_{(n-1)}(\T_M)$.
  On the other hand, if $d < n-1$,  $\Phi_{BM}$ induces $H^n_{b}(\pi_M) \to H^n_{(d)}(\T_M, \nu_U)$.
  Especially, if $d=0$, then $\Phi_{BM} \colon C^n_{b,\alt}(\pi_M) \to C^n_{\nu_U,\alt}(\T_M)$ induces $H^n_{b}(\pi_M) \to H^n_{\nu_U}(\T_M)$ for any $n \geq 2$.
\end{remark}

We prove the following key lemma which corresponds to \cite[Lemma 3.3]{brandenbursky2019bounded}.
Let $a$ and $b$ be generators of $F_2$.
\begin{lem}\label{lem:family of rep}
  Let $U \subset M$ be an open subset such that $\nu_U$ is well-defined on $\T_M$.
Assume that there exists an injection $i \colon F_2 \to \pi_M$.
Let $\alpha$ and $\beta$ be two loops in $M$ representing $i(a)$ and $i(b)$.
Suppose that $\alpha$ and $\beta$ are contained in $U$.
Then there exist a constant $\Lambda>0$ and a family of Lipschitz homomorphisms $\{\rho_{\epsilon} \colon (F_2, \nu_0) \to (\T_M, \nu_{U})\}_{\epsilon \in (0,1)}$  such that
\[ \lim_{\epsilon \to +0} \| \rho_{\epsilon}^{\ast} (E\Gamma_{\nu_U}(c)) - \Lambda i^{\ast}(c) \| = 0 \]
for every $c \in EH^n_{\nu_U}(\pi_M)$.
\end{lem}
Here, $\nu_0 \colon F_2 \to [0,\infty)$ is the trivial norm defined by
\[ \nu_0(w)=
\begin{cases}
  0 & (w = 1_{F_2}), \\
  1 & (w \neq 1_{F_2}).
\end{cases}
\]
The maps $i^{\ast}$ and $\rho_\epsilon^{\ast}$ represent the induced maps $i^{\ast} \colon EH^n_b(\pi_M) \to EH^n_b(F_2)$ and
$\rho_\epsilon^{\ast} \colon EH^n_{\nu_U}(\pi_M) \to EH^n_{\nu_0}(F_2)=EH^n_b(F_2)$.

\begin{proof}
  Our proof is similar to \cite[Lemma 3.3]{brandenbursky2019bounded}.   Let $N(\alpha)$ denote a tubular neighborhood of $\alpha$ in $U$ and take a diffeomorphism
  $n_{\alpha} \colon  N(\alpha) \to S^1 \times B^{n-1}(1) $. Here $B^{n-1}(r)$ denotes the $(n-1)$-ball in $\R^n$ with radius $r$. Let $A_{\epsilon}(\alpha)$ denote $n_{\alpha}^{-1}(S^1 \times B^{n-1}(1-\epsilon))$.
   We define an element $\rho_{\epsilon}(a) \in \T_M $ which ``rotates'' every point in $A_{\epsilon}$ one lap in the direction of $S^1$
  and fixes outside of $N(\alpha)$ (see \cite{brandenbursky2019bounded} for more details). Similarly, we define $N(\beta) \subset U$, $B_{\epsilon}$ and $\rho_{\epsilon}(b) \in \T_M $ 
  (see Figure \ref{fig:nbd}). Thus we obtain the representation $\rho_{\epsilon} \colon F_2 \to \T_M $.
  Since $\supp(\rho_{\epsilon}(w))$ is contained in $U$ for any $w \in F_2$,
  the map $\rho_{\epsilon} \colon (F_2, \nu_0) \to (\T_M , \nu_U) $ is a Lipschitz homomorphism.
By the functoriality of the correspondence $H_{(0)}^n$ (Proposition \ref{prop:functor}), the map $\rho_{\epsilon}^{\ast} \colon EH^n_{\nu_U}(\T_M) \to EH^n_b(\pi_M)$ is well-defined.

\begin{figure}[h]
  \centering
  \includegraphics[keepaspectratio, scale=0.5]
       {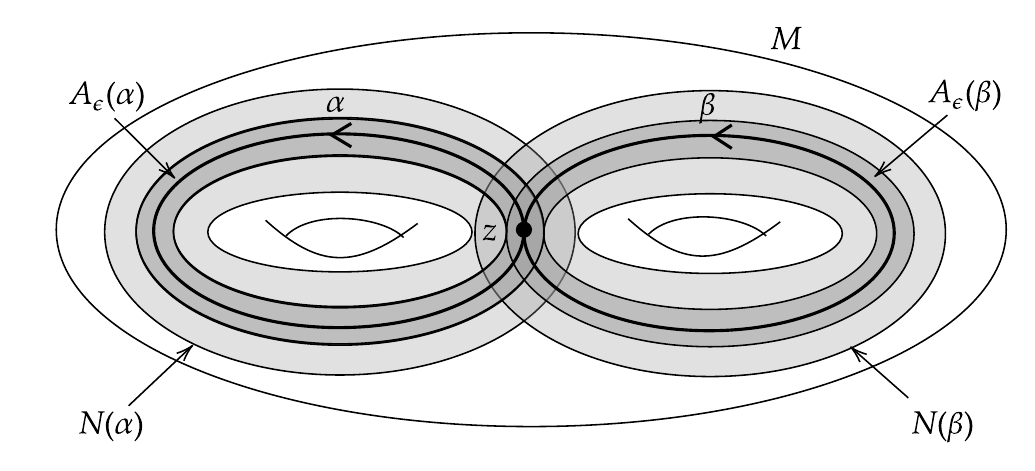}
  \caption{loops $\alpha$ and $\beta$ and their neighborhood}
  \label{fig:nbd}
 \end{figure}

For $w_0,\dots,w_n \in F_2$, we have
\[ \rho_{\epsilon}^{\ast} (E\Gamma_{\nu_U}(c))(w_0,\dots,w_n) = \int_M c (\gamma( \rho_{\epsilon}(w_0) , x),\dots,\gamma( \rho_{\epsilon}(w_n) , x) ) d\mu(x). \]
Let $B_{\epsilon}(\alpha)$ and $B_{\epsilon}(\beta)$ denote $N(\alpha)-A_{\epsilon}(\alpha)$ and $N(\beta)-A_{\epsilon}(\beta)$ respectively. We calculate this integral by decomposing $M$ into 5 parts;
$A_{\epsilon}:= A_{\epsilon}(\alpha) \cap A_{\epsilon}(\beta)$,  $A^a_{\epsilon}:= A(\alpha)-N(\beta)$,
$A^b_{\epsilon}:=A_{\epsilon}(\beta)-N(\alpha)$, $B_{\epsilon}:=B_{\epsilon}(\alpha)\cup B_{\epsilon}(\beta)$, and their exterior $M-(N(\alpha)\cup N(\beta))$ (see Figure \ref{fig:5parts}).

The exterior part is 0 and it turns out that $A^a_{\epsilon}$ and $A^b_{\epsilon}$ part are also 0.
The $A_{\epsilon}$ part is calculated to be $\mu(A_{\epsilon}) i^{\ast}(c) $ and the $B_{\epsilon}$ is bounded by $\mu(B_{\epsilon}) \| c \|$.
Hence the claim follows from $\mu(A_{\epsilon}) \xrightarrow{\epsilon \to +0} \mu(N(\alpha) \cap N(\beta)) >0 $ and $\mu(B_{\epsilon}) \xrightarrow{\epsilon \to +0} 0$.
\end{proof}

\begin{figure}[h]
 \centering
 \includegraphics[keepaspectratio, scale=0.5]
      {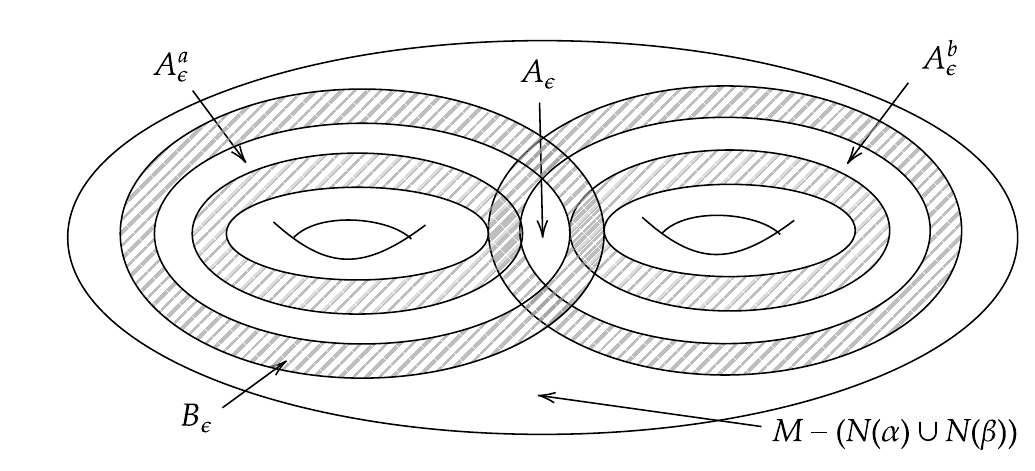}
 \caption{the decomposition of $M$ into 5 parts}
 \label{fig:5parts}
\end{figure}

We give proof of the main result. Our proof is similar to the proof of Theorem A and B in \cite{brandenbursky2019bounded}.

\begin{proof}[Proof of Theorem \ref{thm:main}]
  First, we prove case (1). Let $p\colon \pi_M \to F_2$ be a surjection.
  Assume that $\dim(M)\geq 3$. Then there exists an injection $i \colon F_2 \to \pi_M$ such that $p \circ i = \id_{F_2}$. If $\dim(M)=2$, we can find an injection $i \colon F_2 \to \pi_M$, and there exists a retraction $p\colon \pi_M \to F_2$ of $i$; we use this $p$ instead of the given $p$. If necessary, we retake $U$ to be containing $\alpha$ and $\beta$ in Lemma \ref{lem:family of rep}.

\begin{center}
  \begin{tikzpicture}[auto]
  \node (01) at (0, 2) {$EH^n_b(\pi_M)$};
  \node (11) at (3, 2) {$EH_{\nu_U}^n(\T_M)$};
  \node (00) at (0, 0) {$EH^n_b(F_2)$};
  \draw[->] (11) to node {$\rho_{\epsilon}^{\ast}$} (00);
  \draw[->] (01) to node {$E\Gamma_{\nu_U}$} (11);
  \draw[->, transform canvas={xshift=2pt}] (01) to node {$i^{\ast}$} (00);
  \draw[->, transform canvas={xshift=-2pt}] (00) to node  {$p^{\ast}$} (01);
  \end{tikzpicture}
\end{center}

Note that $EH_{\nu_U}^n(\T_M) \supset \Image(E\Gamma_{\nu_U}\circ p^{\ast}) \cong H_b^n(F_2)/ \Ker(E\Gamma_{\nu_U}\circ p^{\ast})$.
  For $d \in \Ker(E\Gamma_{\nu_U}\circ p^{\ast})$, set $c= p^{\ast}(d)\in EH^n_b(\pi_M)$. Since $i^{\ast} \circ p^{\ast} = \id$, $ i^{\ast}(c) = i^{\ast} \circ  p^{\ast}(d) = d$.

By Lemma \ref{lem:family of rep}, there exist $\Lambda>0$ and a family of representation $\{ \rho_{\epsilon} \}$ such that
\[ \lim_{\epsilon \to +0} \| \rho_{\epsilon}^{\ast} (E\Gamma_{\nu_U}(c)) - \Lambda i^{\ast}(c) \| = 0. \]
Since $E\Gamma_{\nu_U}(c) = E\Gamma_{\nu_U}\circ p^{\ast}(d) = 0$,
$\|i^{\ast}(c)\|= \|d \|=0$. Hence $\Ker(E\Gamma_{\nu_U}\circ p^{\ast}) \subset EN^n(F_2)$.
Therefore,
\[ \dim_{\R} \left( H_b^n(F_2) \Big/ \Ker(E\Gamma_{\nu_U}\circ i^{\ast}) \right) \geq
\dim_{\R} \left( EH_b^n(F_2) \Big/ EN^n(F_2) \right)=\dim_{\R}  \overline{EH}_b^n(F_2)\]
and we complete the proof for (1).

Next, we prove case (2). If $\dim(M)=2$, we can use the argument in the proof of (1). Thus we can assume that $\dim(M)\geq 3$.
Since $\pi_M$ is an acylindrically hyperbolic group,
then there exists a hyperbolic embedding $j\colon  F_2 \times K \to \pi_M$ \cite{MR3589159}.
We define $s \colon F_2 \to F_2 \times K$ by $r(x)=(x,\id)$ for $x \in F_2$ and $i \colon F_2 \to \pi_M$ by $i = j \circ s$.
Since we assumed that $\dim(M)\geq 3$, $i$ is injective. If necessary, we retake $U$ to be containing $\alpha$ and $\beta$ in Lemma \ref{lem:family of rep}.

The induced map $j^{\ast} \colon  EH^n_b(\pi_M) \to  EH^n_b(F_2 \times K) $ is surjective \cite{MR3431671}.
Since $s^{\ast} \colon EH^n_b(F_2 \times K) \to EH^n_b(F_2)$ induces an isomorphism, $i^{\ast} = j^{\ast} \circ s^{\ast}$ is also surjective.

\begin{center}
  \begin{tikzpicture}[auto]
  \node (01) at (0, 2) {$EH^n_b(\pi_M)$};
  \node (11) at (3, 2) {$EH_{\nu_U}^n(\T_M)$};
  \node (00) at (0, 0) {$EH^n_b(F_2)$};
  \draw[->] (11) to node {$\rho_{\epsilon}^{\ast}$} (00);
  \draw[->] (01) to node {$E\Gamma_{\nu_U}$} (11);
  \draw[->] (01) to node {$i^{\ast}$} (00);
  \end{tikzpicture}
\end{center}

Note that $EH_{\nu_U}^n(\T_M) \supset \Image(E\Gamma_{\nu_U}) \cong H_b^n(\pi_M)/ \Ker(E\Gamma_{\nu_U})$.
 Let $c \in EH^n_b(\pi_M) $.
If $ E\Gamma_{\nu_U}(c) =0$, then
$\| i^{\ast}(c) \| = 0$ by Lemma \ref{lem:family of rep}.
Thus $\Ker(E\Gamma_{\nu_U}) \subset \Ker(q \circ i^{\ast})$, where $q  \colon EH^n_b(F_2) \to \overline{EH}^n_b(F_2)$ is the quotient map. Therefore,
\[\dim_{\R} \left( H_b^n(\pi_M) \Big/ \Ker(E\Gamma_{\nu_U}) \right) \geq  \dim_{\R} \left( EH_b^n(F_2) \Big/ \Ker(q \circ i^{\ast}) \right) .\]
Since $q \circ i^{\ast}$ is surjective,
$EH_b^n(F_2) / \Ker(q \circ i^{\ast}) \cong \overline{EH}_b^n(F_2)$ and we complete the proof.
\end{proof}

\begin{proof}[Proof of Corollary \ref{cor:inf_dim}]
Since the dimension of $\overline{EH}_b^3(F_2)$ is uncountably infinite \cite{MR1455524}, by Theorem \ref{thm:main},
$EH^3_{\nu_U}(\T_M)=EH_{(0)}^3(\T_M , \nu_U) $ is also uncountably infinite-dimensional.
For $d=1,2$, There is the natural map $EH_{(d)}^3(\T_M , \nu_U) \to EH_{(0)}^3(\T_M , \nu_U)$ induced by the inclusion $C_{(d)}^3(\T_M , \nu_U) \to C_{(0)}^3(\T_M , \nu_U)$.
Since $\Phi_{BM}(c) \in C_{(d)}^3(\T_M , \nu_U)$ for $c \in C^3_b(\pi_M)$ by Proposition \ref{prop:main_observation}, this map surjects onto $\Image(E \Gamma_{\nu_U}) \subset EH^3_{\nu_U}(\T_M)$. We can see that the dimension of $\Image(E \Gamma_{\nu_U})$
 is uncountably infinite in the proof of Theorem \ref{thm:main}, thus $EH_{(d)}^3(\T_M , \nu_U)$ is also uncountably infinite-dimensional.
\end{proof}

\section*{Acknowledgment}
The author would like to thank Morimichi Kawasaki for suggesting this work and for many helpful comments.
He would also like to thank Michael Brandenbursky and Micha{\l} Marcinkowski for their comments on the webinar.
He would like to thank Ryuma Orita for reading the manuscript and his advice on improving this paper.
The author is supported by JSPS KAKENHI Grant Number JP20H00114 and JST-Mirai Program Grant Number JPMJMI22G1.


\begin{thebibliography}{0}
  
  \bibitem{MR490874}
  A.~Banyaga, \emph{Sur la structure du groupe des diff\'{e}omorphismes qui
    pr\'{e}servent une forme symplectique}, Comment. Math. Helv. \textbf{53}
    (1978), no.~2, 174--227.
  
  \bibitem{MR1445290}
  \bysame, \emph{The structure of classical diffeomorphism groups}, Mathematics
    and its Applications, vol. 400, Kluwer Academic Publishers Group, Dordrecht,
    1997.
  
  \bibitem{brandenbursky2018fragmentation}
  Michael Brandenbursky and Jarek K\k{e}dra, \emph{Fragmentation norm and
    relative quasimorphisms}, Proc. Amer. Math. Soc. \textbf{150} (2022), no.~10,
    4519--4531. \MR{4470192}
  
  \bibitem{brandenbursky2019bounded}
  Michael Brandenbursky and Micha{\l} Marcinkowski, \emph{Bounded cohomology of
    transformation groups}, Math. Ann. \textbf{382} (2022), no.~3-4, 1181--1197.
    \MR{4403221}
  
  \bibitem{MR2509711}
  D.~Burago, S.~Ivanov, and L.~Polterovich, \emph{Conjugation-invariant norms on
    groups of geometric origin}, Groups of diffeomorphisms, Adv. Stud. Pure
    Math., vol.~52, Math. Soc. Japan, Tokyo, 2008, pp.~221--250.
  
  \bibitem{MR2527432}
  D.~Calegari, \emph{scl}, MSJ Memoirs, vol.~20, Mathematical Society of Japan,
    Tokyo, 2009.
  
  \bibitem{MR3589159}
  F.~Dahmani, V.~Guirardel, and D.~Osin, \emph{Hyperbolically embedded subgroups
    and rotating families in groups acting on hyperbolic spaces}, Mem. Amer.
    Math. Soc. \textbf{245} (2017), no.~1156, v+152.
  
  \bibitem{MR2208798}
  M.~Entov and L.~Polterovich, \emph{Quasi-states and symplectic intersections},
    Comment. Math. Helv. \textbf{81} (2006), no.~1, 75--99.
  
  \bibitem{MR584082}
  A.~Fathi, \emph{Structure of the group of homeomorphisms preserving a good
    measure on a compact manifold}, Ann. Sci. \'{E}cole Norm. Sup. (4)
    \textbf{13} (1980), no.~1, 45--93.
  
  \bibitem{MR3726870}
  R.~Frigerio, \emph{Bounded cohomology of discrete groups}, Mathematical Surveys
    and Monographs, vol. 227, American Mathematical Society, Providence, RI,
    2017.
  
  \bibitem{MR3431671}
  R.~Frigerio, M.~B. Pozzetti, and A.~Sisto, \emph{Extending higher-dimensional
    quasi-cocycles}, J. Topol. \textbf{8} (2015), no.~4, 1123--1155.
  
  \bibitem{GG04}
  Jean-Marc Gambaudo and \'{E}tienne Ghys, \emph{Commutators and diffeomorphisms
    of surfaces}, Ergodic Theory Dynam. Systems \textbf{24} (2004), no.~5,
    1591--1617.
  
  \bibitem{MR686042}
  M.~Gromov, \emph{Volume and bounded cohomology}, Inst. Hautes \'{E}tudes Sci.
    Publ. Math. (1982), no.~56, 5--99 (1983).
  
  \bibitem{MR3523258}
  M.~Kawasaki, \emph{Relative quasimorphisms and stably unbounded norms on the
    group of symplectomorphisms of the {E}uclidean spaces}, J. Symplectic Geom.
    \textbf{14} (2016), no.~1, 297--304.
  
  \bibitem{MR3809596}
  M.~Kimura, \emph{Conjugation-invariant norms on the commutator subgroup of the
    infinite braid group}, J. Topol. Anal. \textbf{10} (2018), no.~2, 471--476.
  
  \bibitem{MR3368093}
  A.~Minasyan and D.~Osin, \emph{Acylindrical hyperbolicity of groups acting on
    trees}, Math. Ann. \textbf{362} (2015), no.~3-4, 1055--1105.
  
  \bibitem{MR2109110}
  C.~Ogle, \emph{Polynomially bounded cohomology and discrete groups}, J. Pure
    Appl. Algebra \textbf{195} (2005), no.~2, 173--209.
  
  \bibitem{MR2377251}
  Y.~Oh and S.~M\"{u}ller, \emph{The group of {H}amiltonian homeomorphisms and
    {$C^0$}-symplectic topology}, J. Symplectic Geom. \textbf{5} (2007), no.~2,
    167--219. \MR{2377251}
  
  \bibitem{MR3430352}
  D.~Osin, \emph{Acylindrically hyperbolic groups}, Trans. Amer. Math. Soc.
    \textbf{368} (2016), no.~2, 851--888.
  
  \bibitem{MR2276956}
  L.~Polterovich, \emph{Floer homology, dynamics and groups}, Morse theoretic
    methods in nonlinear analysis and in symplectic topology, NATO Sci. Ser. II
    Math. Phys. Chem., vol. 217, Springer, Dordrecht, 2006, pp.~417--438.
  
  \bibitem{MR326737}
  G.~P. Scott, \emph{Compact submanifolds of {$3$}-manifolds}, J. London Math.
    Soc. (2) \textbf{7} (1973), 246--250.
  
  \bibitem{MR1455524}
  T.~Soma, \emph{Bounded cohomology and topologically tame {K}leinian groups},
    Duke Math. J. \textbf{88} (1997), no.~2, 357--370.
  
  \end{thebibliography}
\end{document}